\newtheorem{theorem}{Theorem}[section]
\newtheorem{proposition}[theorem]{Proposition}
\newtheorem{corollary}[theorem]{Corollary}
\theoremstyle{definition}
\newtheorem{conjecture}[theorem]{Conjecture}
\theoremstyle{remark}
\newtheorem{remark}[theorem]{Remark}
\numberwithin{equation}{section}
\begin{document}
\title{Operator inequalities related to the Corach--Porta--Recht inequality}
\author[C. Conde, M.S. Moslehian, A. Seddik]{Cristian Conde$^1$, Mohammad Sal Moslehian$^2$ and Ameur Seddik$^3$}
\address{$^1$ Instituto de Ciencias, Universidad Nacional de Gral. Sarmiento, J.
M. Gutierrez 1150, (B1613GSX) Los Polvorines and
Instituto Argentino de Matemática ``Alberto P. Calder\'on", Saavedra 15 3º piso,
(C1083ACA) Buenos Aires\\
Argentina}
\email{cconde@ungs.edu.com}
\address{$^2$ Department of Pure Mathematics, Center of Excellence in Analysis on Algebraic Structures (CEAAS), Ferdowsi University of
Mashhad, P.O. Box 1159, Mashhad 91775, Iran}
\email{moslehian@ferdowsi.um.ac.ir and moslehian@member.ams.org}
\address{$^3$ Department of Mathematics, Faculty of Science, Tabuk University, Saudi Arabia}
\email{seddikameur@hotmail.com}
\keywords{Invertible operator, unitarily invariant norm, Heinz inequality, Corach--Porta--Recht inequality, operator inequality.}

\subjclass[2010]{Primary 47B47; Secondary 47A63, 47A30.}

\begin{abstract}
We prove some refinements of an inequality due to X. Zhan in an arbitrary complex Hilbert space by using some results on the Heinz inequality. We present several related inequalities as well as new variants of the Corach--Porta--Recht inequality. We also characterize the class of operators satisfying $\left\Vert SXS^{-1}+S^{-1}XS+kX\right\Vert
\geq (k+2)\left\Vert X\right\Vert$ under certain conditions.
\end{abstract}

\maketitle

\section{Introduction}

Let $\mathbb{B}(\mathscr{H})$, $\mathfrak{I}(\mathscr{H})$ and $\mathfrak{U}(\mathscr{H})$ be the
$C^*$-algebra of all bounded linear operators acting on a complex Hilbert space
$\mathscr{H}$, the set of all invertible elements in $\mathbb{B}(\mathscr{H})$ and the class
of all unitary operators in $\mathbb{B}(\mathscr{H})$, respectively. The operator norm on $\mathbb{B}(\mathscr{H})$ is denoted by $\Vert\cdot\Vert$. We denote by

\begin{enumerate}
\item[$\bullet $] $\mathscr{S}_{0}(\mathscr{H}),$ the set of all invertible
self-adjoint operators in $\mathbb{B}(\mathscr{H}),$

\item[$\bullet $] $\mathcal{P}(\mathscr{H}),$ the set of all positive operators in
$\mathbb{B}(\mathscr{H}),$

\item[$\bullet $] $\mathcal{P}_{0}(\mathscr{H}),$ the set of all invertible positive
operators in $\mathbb{B}(\mathscr{H}),$

\item[$\bullet $] $\mathfrak{U}_{r}(\mathscr{H})=\mathscr{S}_{0}(\mathscr{H})\cap \mathfrak{U}
(\mathscr{H}),$ the set of all unitary reflection operators in $\mathbb{B}(\mathscr{H}),$

\item[$\bullet $] $\mathfrak{N}_{0}(\mathscr{H})$, the set of all invertible normal
operators in $\mathbb{B}(\mathscr{H}).$
\end{enumerate}
For $1\leq p<\infty$, the Schatten $p$-norm class consists of all compact operators $A$ for which $\|A\|_{p}:=({\rm tr}|A|^{p})^{1/p}<\infty$,
where ${\rm tr}$ is the usual trace functional. If $A$ and $B$ are operators in ${\mathbb
B}({\mathscr H})$ we use $A\oplus B$ to denote the $2\times2$
operator matrix $\left[\begin{array}{cc}
 A & 0 \\
  0 & B
\end{array}\right]$, regarded as an operator on ${\mathscr H}\oplus {\mathscr H}$. One can show that
\begin{eqnarray}\label{erf}
\|A\oplus B\|=\max(\|A\|,\|B\|),\quad \|A\oplus
B\|_{p}=\left(\|A\|_{p}^{p}+\|B\|_{p}^{p}\right)^{1/p}\end{eqnarray}
One of the most essential inequalities in the operator theory is the following so-called Heinz inequality:
\begin{eqnarray}\label{1}
\left\Vert
PX+XQ\right\Vert \geq \left\Vert P^{\alpha }XQ^{1-\alpha }+P^{1-\alpha
}XQ^{\alpha }\right\Vert
\end{eqnarray}
for all $P,Q\in \mathcal{P}(\mathscr{H})$, all $X\in \mathbb{B}(\mathscr{H})$ and all $\alpha \in [0,1]$. The proof given by Heinz \cite{4} is based on the complex analysis and is somewhat
complicated. In \cite{5}, McIntosh showed that the Heinz inequality is a consequence of
the following inequality
\begin{eqnarray}\label{2}
\forall A,B,X\in \mathbb{B}(\mathscr{H}),\ \left\Vert A^*AX+XBB^{*
}\right\Vert \geq 2\left\Vert AXB\right\Vert
\end{eqnarray}
McIntosh proved that \eqref{2} holds and gave his ingenious proof of $\eqref{2}\Rightarrow \eqref{1}$. In the literature, inequality \eqref{2} is called ``Arithmetic-geometric-Mean Inequality''.

In \cite{2} Corach--Porta--Recht proved the following, so-called C-P-R inequality,
\begin{eqnarray}\label{3}
\forall S\in \mathscr{S}_{0}(\mathscr{H}) \forall X\in \mathbb{B}(\mathscr{H}),\ \left\Vert
SXS^{-1}+S^{-1}XS\right\Vert \geq 2\left\Vert X\right\Vert
\end{eqnarray}
The C-P-R inequality is a key factor in their study of differential geometry of
self-adjoint operators. They proved this inequality by using the integral
representation of a self-adjoint operator with respect to a spectral measure.

An immediate consequence of the C-P-R inequality is the following:
\begin{eqnarray}\label{4}
\forall S,T\in \mathscr{S}_{0}(\mathscr{H}) \forall X\in \mathbb{B}(\mathscr{H}),\ \left\Vert
SXT^{-1}+S^{-1}XT\right\Vert \geq 2\left\Vert X\right\Vert
\end{eqnarray}
Using the polar decomposition of an operator, we may deduce easily from the
C-P-R inequality the following operator inequality
\begin{eqnarray}\label{5}
\forall S\in \mathfrak{I}(\mathscr{H}) \forall X\in \mathbb{B}(\mathscr{H}),\ \left\Vert S^{*
}XS^{-1}+S^{-1}XS^*\right\Vert \geq 2\left\Vert X\right\Vert
\end{eqnarray}

Three years after and in \cite{3}, Fujii--Fujii--Furuta--Nakamato proved that
inequalities \eqref{1}, \eqref{2}, \eqref{3}, \eqref{4} and two other ones hold and are mutually equivalent.
By giving an easy proof of one of them, they showed a simplified proof of
Heinz inequality, see also \cite{FFN}. Also, it is easy to see that two inequalities \eqref{3} and \eqref{5}
are equivalent.

In \cite{6}, it is shown that the operator inequality
\begin{eqnarray}\label{6}
\forall X\in \mathbb{B}(\mathscr{H}),\ \left\Vert SXS^{-1}+S^{-1}XS\right\Vert \geq
2\left\Vert X\right\Vert ,\ \left( S\in \mathfrak{I}(\mathscr{H})\right)
\end{eqnarray}
is in fact a characterization of $\mathbb{C}^*\mathscr{S}_{0}(\mathscr{H})=\{\lambda M: \lambda \in \mathbb{C}\setminus\{0\}, M \in \mathscr{S}_{0}(\mathscr{H})\}$.

Recently in \cite{7}, using inequality \eqref{5} and the above
characterization of $\mathbb{C}^*\mathscr{S}_{0}(\mathscr{H})$, it is proved
that this class is also characterized by each of the following statements:
\begin{eqnarray}\label{7}
\forall X\in \mathbb{B}(\mathscr{H}),\ \left\Vert SXS^{-1}+S^{-1}XS\right\Vert
=\left\Vert S^*XS^{-1}+S^{-1}XS^*\right\Vert \,\, \left(S\in
\mathfrak{I}(\mathscr{H})\right)
\end{eqnarray}

\begin{eqnarray}\label{8}
\forall X\in \mathbb{B}(\mathscr{H}),\ \left\Vert SXS^{-1}+S^{-1}XS\right\Vert \geq
\left\Vert S^*XS^{-1}+S^{-1}XS^*\right\Vert \,\, \left( S\in
\mathfrak{I}(\mathscr{H})\right)
\end{eqnarray}

Note that this class of operators is the class of all invertible normal
operators in $\mathbb{B}(\mathscr{H})$ the spectrum of which is included in a
straight line passing through the origin.

For the class of all invertible normal operators in $\mathbb{B}(\mathscr{H}),$ it is proved \cite{7, 8} that this class is characterized by each of the
following properties

\begin{eqnarray}\label{9}
\forall X\in \mathbb{B}(\mathscr{H}),\ \left\Vert SXS^{-1}\right\Vert +\left\Vert
S^{-1}XS\right\Vert \geq 2\left\Vert X\right\Vert \,\, \left( S\in \mathfrak{I}
(\mathscr{H})\right)
\end{eqnarray}

\begin{eqnarray}\label{10}
\forall X\in \mathbb{B}(\mathscr{H}),\ \left\Vert SXS^{-1}\right\Vert +\left\Vert
S^{-1}XS\right\Vert =\left\Vert S^*XS^{-1}\right\Vert +\left\Vert
S^{-1}XS^*\right\Vert \,\, \left( S\in \mathfrak{I}(\mathscr{H})\right)
\end{eqnarray}

\begin{eqnarray}\label{11}
\forall X\in \mathbb{B}(\mathscr{H}),\ \left\Vert SXS^{-1}\right\Vert +\left\Vert
S^{-1}XS\right\Vert \geq \left\Vert S^*XS^{-1}\right\Vert +\left\Vert
S^{-1}XS^*\right\Vert \,\, \left( S\in \mathfrak{I}(\mathscr{H})\right)
\end{eqnarray}

\begin{eqnarray}\label{12}
\forall X\in \mathbb{B}(\mathscr{H}),\left\Vert SXS^{-1}\right\Vert +\left\Vert
S^{-1}XS\right\Vert \leq \left\Vert S^*XS^{-1}\right\Vert +\left\Vert
S^{-1}XS^*\right\Vert \,\, \left( S\in \mathfrak{I}(\mathscr{H})\right)
\end{eqnarray}

It is natural to ask what happen if we consider in each of the above
operator inequalities instead of ``$\geq$'', either ``$\leq$'' or ``$=$''.

Let us consider the following associated operator inequalities

\begin{eqnarray}\label{13}
\forall X\in \mathbb{B}(\mathscr{H}),\ \left\Vert SXS^{-1}+S^{-1}XS\right\Vert \leq
2\left\Vert X\right\Vert \,\, \left( S\in \mathfrak{I}(\mathscr{H})\right)
\end{eqnarray}

\begin{eqnarray}\label{14}
\forall X\in \mathbb{B}(\mathscr{H}),\ \left\Vert SXS^{-1}+S^{-1}XS\right\Vert
=2\left\Vert X\right\Vert \,\, \left( S\in \mathfrak{I}(\mathscr{H})\right)
\end{eqnarray}

\begin{eqnarray}\label{15}
\forall X\in \mathbb{B}(\mathscr{H}),\ \left\Vert SXS^{-1}+S^{-1}XS\right\Vert \leq
\left\Vert S^*XS^{-1}+S^{-1}XS^*\right\Vert \,\, \left( S\in
\mathfrak{I}(\mathscr{H})\right)
\end{eqnarray}

\begin{eqnarray}\label{16}
\forall X\in \mathbb{B}(\mathscr{H}),\ \left\Vert SXS^{-1}\right\Vert +\left\Vert
S^{-1}XS\right\Vert =2\left\Vert X\right\Vert \,\, \left( S\in \mathfrak{I}
(\mathscr{H})\right)
\end{eqnarray}

\begin{eqnarray}\label{17}
\forall X\in \mathbb{B}(\mathscr{H}),\ \left\Vert SXS^{-1}\right\Vert +\left\Vert
S^{-1}XS\right\Vert \leq 2\left\Vert X\right\Vert \,\, \left( S\in \mathfrak{I}
(\mathscr{H})\right)
\end{eqnarray}

In \cite{7, 8, 9}, it was established that each of inequalities \eqref{13}, \eqref{16} and \eqref{17} characterize $\mathbb{R}^{* }\mathfrak{U}(\mathscr{H})$ and \eqref{14}
characterizes $\mathbb{C}^*\mathfrak{U}_{r}(\mathscr{H})$.

We found also in \cite{7, 8} that $\mathbb{R}^*\mathfrak{U}(\mathscr{H})$ is also
characterized by each of the following two operator equalities

\begin{eqnarray}\label{18}
\forall X\in \mathbb{B}(\mathscr{H}),\left\Vert S^*XS^{-1}+S^{-1}XS^{*
}\right\Vert =2\left\Vert X\right\Vert \,\, \left( S\in \mathfrak{I}(\mathscr{H})\right)
\end{eqnarray}
\begin{eqnarray}\label{19}
\forall X\in \mathbb{B}(\mathscr{H}),\left\Vert S^*XS^{-1}\right\Vert
+\left\Vert S^{-1}XS^*\right\Vert =2\left\Vert X\right\Vert \,\, \left(
S\in \mathfrak{I}(\mathscr{H})\right)
\end{eqnarray}

A unitarily invariant norm $\left|\left|\left|\cdot\right|\right|\right|$ is defined on a norm ideal $\mathfrak{J}_{\left|\left|\left| .\right|\right|\right|}$ of $\mathbb{B}(\mathscr{H})$ associated with it and has the property
$\left|\left|\left| UXV\right|\right|\right|=\left|\left|\left| X\right|\right|\right|$, where $U$ and $V$ are unitaries and $X \in \mathfrak{J}_{\left|\left|\left| .\right|\right|\right|}$. Note that inequalities \eqref{1}, \eqref{2}, \eqref{3}, \eqref{4} were
generalized for arbitrary unitarily invariant norms. Furthermore, it is proved in \cite{Co} that the characterization of the invertible normal
operators via inequalities of the uniform norm in $\mathbb{B}(\mathscr{H})$
(\eqref{9}-\eqref{13}) also holds for any unitarily invariant norm.

In \cite{12} and in the case $\dim \mathscr{H}<\infty $, by introducing two parameters $r$ and $t$, Zhan proved that for $n \times n$ positive matrices $
A, B$, arbitrary $n \times n$ matrix $X$ and $(t,r)\in (-2,2]\times \lbrack \frac{1}{2},
\frac{3}{2}],$ the following inequality
\begin{eqnarray}\label{21}
(2+t)\left|\left|\left|
A^{r}XB^{2-r}+A^{2-r}XB^{r}\right|\right|\right| \leq 2\left|\left|\left| A^{2}X+tAXB+XB^{2}\right|\right|\right|
\end{eqnarray}
holds for any unitarily invariant norm $\left|\left|\left| .\right|\right|\right|$.
The tool used for proving this inequality is based on the induced Schur product
norm. It should be noted that the case $r=1,\ t=0$ of this result is the
well-known arithmetic-geometric mean inequality due to Bhatia and Davis
\cite{BD}. In this paper we want to extend it and to obtain some refinements of this
inequality to the case where $\mathscr{H}$ is a Hilbert space of arbitrary
dimension by using elementary techniques. We also characterize the class of
operators satisfying $\left\Vert SXS^{-1}+S^{-1}XS+kX\right\Vert
\geq (k+2)\left\Vert X\right\Vert$ under certain conditions.

Recently, Kittaneh proved in \cite{Ki} the following refinement of the
Heinz inequality.

\begin{proposition}
Let $A,B\in \mathcal{P}(\mathscr{H})$ and $X \in \mathfrak{J}_{\left|\left|\left| .\right|\right|\right|}$. Then
\begin{enumerate}
\item for $\alpha\in[0,\frac12]$ the following inequalities hold
\begin{eqnarray}\label{kit1}
\left|\left|\left| A^{\alpha }XB^{1-\alpha }+A^{1-\alpha
}XB^{\alpha }\right|\right|\right|&\leq& \left|\left|\left| A^{\alpha/2 }XB^{1-\alpha/2 }+A^{1-\alpha/2
}XB^{\alpha/2 }\right|\right|\right|\nonumber\\
&\leq&
\frac{1}{\alpha}\int_0^{\alpha}\left|\left|\left| A^{\nu }XB^{1-\nu }+A^{1-\nu
}XB^{\nu }\right|\right|\right|d\nu \nonumber \\
&\leq& \frac 12\left|\left|\left| AX+XB\right|\right|\right|+\frac 12
\left|\left|\left| A^{\alpha }XB^{1-\alpha }+A^{1-\alpha
}XB^{\alpha }\right|\right|\right|\nonumber\\
&\leq& \left|\left|\left| AX+XB\right|\right|\right| \
\end{eqnarray}

\item for $\alpha\in[\frac12,1]$ the following inequalities hold
\begin{eqnarray}\label{kit2}
\left|\left|\left| A^{\alpha }XB^{1-\alpha }+A^{1-\alpha
}XB^{\alpha }\right|\right|\right|&\leq& \left|\left|\left| A^{\frac{1+\alpha}{2}}XB^{\frac{1-\alpha}{2}}+A^{\frac{1-\alpha}{2}}XB^{\frac{1+\alpha}{2}}\right|\right|\right|\nonumber\\
&\leq&
\frac{1}{1-\alpha}\int_{\alpha}^1\left|\left|\left| A^{\nu }XB^{1-\nu }+A^{1-\nu
}XB^{\nu }\right|\right|\right|d\nu \nonumber \\
&\leq& \frac 12\left|\left|\left| AX+XB\right|\right|\right|+\frac 12
\left|\left|\left| A^{\alpha }XB^{1-\alpha }+A^{1-\alpha
}XB^{\alpha }\right|\right|\right|\nonumber\\
&\leq& \left|\left|\left| AX+XB\right|\right|\right| \
\end{eqnarray}
where
\begin{align}
\left|\left|\left| AX+XB\right|\right|\right|&=\lim\limits_{\alpha\to
0}\frac{1}{\alpha}\int_0^{\alpha}\left|\left|\left| A^{\nu }XB^{1-\nu }+A^{1-\nu
}XB^{\nu }\right|\right|\right|d\nu\nonumber \\&=\lim\limits_{\alpha\to
1}\frac{1}{1-\alpha}\int_{\alpha}^1\left|\left|\left| A^{\nu }XB^{1-\nu
}+A^{1-\nu
}XB^{\nu }\right|\right|\right|d\nu.\nonumber \
\end{align}

\end{enumerate}
\end{proposition}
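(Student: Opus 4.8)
The plan is to reduce the entire statement to three properties of the single function $f(\nu):=\left|\left|\left| A^{\nu}XB^{1-\nu}+A^{1-\nu}XB^{\nu}\right|\right|\right|$, defined for $\nu\in[0,1]$: (i) $f$ is continuous on $[0,1]$; (ii) $f$ is symmetric about $\tfrac12$, that is, $f(\nu)=f(1-\nu)$, which is immediate since swapping the two summands does not change the norm; and (iii) $f$ is \emph{convex} on $[0,1]$. Property (iii) is the one nontrivial input; it is a known fact about the Heinz function, and it can also be extracted from the Heinz inequality \eqref{1} itself by proving midpoint convexity and then using (i). From (iii) and (ii) one obtains the needed monotonicity: $f$ is non-increasing on $[0,\tfrac12]$ and non-decreasing on $[\tfrac12,1]$. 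I note that this monotonicity can alternatively be read off directly from \eqref{1}: applying the Heinz inequality with $P=A^{1-2\mu}$, $Q=B^{1-2\mu}$, with $A^{\mu}XB^{\mu}$ in place of $X$, and with the exponent $t=\tfrac{\nu-\mu}{1-2\mu}$, one obtains $f(\mu)\ge f(\nu)$ whenever $0\le\mu\le\nu\le 1-\mu$.

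Granting (i)--(iii), I would verify the four inequalities of \eqref{kit1} (for $\alpha\in[0,\tfrac12]$) line by line. The first, $f(\alpha)\le f(\tfrac{\alpha}{2})$, is monotonicity, since $\tfrac{\alpha}{2}\le\alpha$ and both lie in $[0,\tfrac12]$. For the second and third, I would apply the Hermite--Hadamard inequality to the convex function $f$ on the interval $[0,\alpha]$, whose midpoint is $\tfrac{\alpha}{2}$: this gives $f(\tfrac{\alpha}{2})\le\tfrac1\alpha\int_0^\alpha f(\nu)\,d\nu\le\tfrac{f(0)+f(\alpha)}{2}$, and since $f(0)=\left|\left|\left| AX+XB\right|\right|\right|$ these are exactly the two middle lines. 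The fourth line is equivalent to $f(\alpha)\le\left|\left|\left| AX+XB\right|\right|\right|=f(1)$, which is the Heinz inequality \eqref{1} with $P=A$, $Q=B$ and exponent $\alpha$. The chain \eqref{kit2} for $\alpha\in[\tfrac12,1]$ is handled identically: the first line is monotonicity ($\alpha\le\tfrac{1+\alpha}{2}$, both in $[\tfrac12,1]$), the two middle lines are Hermite--Hadamard on $[\alpha,1]$ (midpoint $\tfrac{1+\alpha}{2}$) together with $f(1)=\left|\left|\left| AX+XB\right|\right|\right|$, and the last line is again \eqref{1}.

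For the two limit identities I would use property (i): the mean value of a continuous function over a shrinking interval converges to the value of the function at the endpoint, so $\lim_{\alpha\to0}\tfrac1\alpha\int_0^\alpha f(\nu)\,d\nu=f(0)=\left|\left|\left| AX+XB\right|\right|\right|$ and $\lim_{\alpha\to1}\tfrac1{1-\alpha}\int_\alpha^1 f(\nu)\,d\nu=f(1)=\left|\left|\left| AX+XB\right|\right|\right|$. When $A$ or $B$ fails to be invertible, the relevant boundary values are obtained by the standard approximation, replacing $A,B$ with $A+\varepsilon I,B+\varepsilon I$ and letting $\varepsilon\to0$.

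The step I expect to be the main obstacle is establishing the convexity (iii) of $f$ on $[0,1]$; once it is available, the proposition is just the Hermite--Hadamard inequality, the trivial symmetry of $f$, and the Heinz inequality packaged together. I would therefore concentrate the effort on the convexity, deriving it from \eqref{1} via midpoint convexity and continuity (or citing it if a reference is acceptable), and treat the remainder as routine.
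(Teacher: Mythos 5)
The paper itself offers no proof of this proposition: it is quoted as background from Kittaneh \cite{Ki}, so the only meaningful comparison is with the argument of that cited paper, and your plan is essentially it. The entire point of \cite{Ki} is that $f(\nu)=\left|\left|\left| A^{\nu}XB^{1-\nu}+A^{1-\nu}XB^{\nu}\right|\right|\right|$ is convex on $[0,1]$ and symmetric about $\tfrac12$, and that the two middle estimates are the Hermite--Hadamard inequality on $[0,\alpha]$, respectively $[\alpha,1]$; your line-by-line verification of the four inequalities from monotonicity, Hermite--Hadamard and \eqref{1} is correct, and your direct derivation of the monotonicity from \eqref{1} (with $P=A^{1-2\mu}$, $Q=B^{1-2\mu}$ and $A^{\mu}XB^{\mu}$ in place of $X$) checks out. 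The one substantive ingredient, convexity, you only assert; since the proposition you are proving is precisely the content of \cite{Ki}, citing it would be circular, so you should include the short midpoint argument, which is cleanest via \eqref{2} rather than \eqref{1}: with $s=\tfrac{\mu+\nu}{2}$, $d=\tfrac{\nu-\mu}{2}$ and $Y=A^{s-d}XB^{1-s-d}+A^{1-s-d}XB^{s-d}$ one gets $2f(s)=2\left|\left|\left| A^{d}YB^{d}\right|\right|\right|\le\left|\left|\left| A^{2d}Y+YB^{2d}\right|\right|\right|\le f(\mu)+f(\nu)$, and midpoint convexity together with boundedness ($f\le f(0)$ by \eqref{1}) yields convexity on $[0,1]$.

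The one genuine gap is in the final limit identities. Your justification rests on continuity of $f$ on all of $[0,1]$, which is true when $A,B\in\mathcal{P}_{0}(\mathscr{H})$ (then $\nu\mapsto A^{\nu}$ is norm-continuous), but can fail for general $A,B\in\mathcal{P}(\mathscr{H})$: take $A$ a nontrivial projection, $B=I$ and $X\neq 0$ with $AX=0$; then $f(\nu)=0$ for $\nu\in(0,1)$ while $f(0)=f(1)=\left|\left|\left| X\right|\right|\right|=\left|\left|\left| AX+XB\right|\right|\right|$, so the averaged integrals tend to $0$, not to $\left|\left|\left| AX+XB\right|\right|\right|$. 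Your proposed repair, replacing $A,B$ by $A+\varepsilon I$, $B+\varepsilon I$ and letting $\varepsilon\to 0$, does salvage the two inequality chains \eqref{kit1} and \eqref{kit2} (each inequality is stable under this limit for fixed $\nu$, and the integral terms pass to the limit by dominated convergence), but it does not give the limit identities for the original $A,B$, because the convergence is not uniform in $\nu$ near the endpoints. So either prove the limit identities under the additional hypothesis of invertibility (which is all that is used later, since the paper's main theorem reduces to $A,B\in\mathcal{P}_{0}(\mathscr{H})$), or flag that this part of the quoted statement requires endpoint continuity; the rest of your argument stands.
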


\section{Main results}

In this section, we shall prove that inequality \eqref{21} of Zhan follows immediately from the generalized version of the known inequalities \eqref{1} and \eqref{3} in the more general case of arbitrary complex Hilbert space.

\begin{theorem}\label{t1}
Let $A,B\in \mathcal{P}(\mathscr{H})$, where $\mathscr{H}$ is a Hilbert space of of arbitrary dimension and let $t\leq 2$, $r\in \lbrack \frac{1}{2},
\frac{3}{2}].$ Then for any unitarily invariant norm $\left|\left|\left|
.\right|\right|\right|$ and for every  $ X \in
\mathfrak{J}_{\left|\left|\left| .\right|\right|\right|},$ the following
inequalities hold
\begin{enumerate}
 \item for $r\in[\frac12, 1]$
\vspace{-0.2cm}\begin{align}
&\hspace{-0.5cm}\ 2\left|\left|\left|
A^{2}X+XB^{2}+tAXB\right|\right|\right|\geq
2\left|\left|\left|A^{2}X+XB^{2}+2AXB
\right|\right|\right|-(4-2t)\left|\left|\left|
AXB\right|\right|\right| \nonumber \\ &\geq
 4\left|\left|\left|
A^{\frac{3}{2}}XB^{\frac{1
}{2}}+A^{\frac{1}{2}}XB^{\frac{3}{2}}\right|\right|\right|
-(4-2t)\left|\left|\left|
AXB\right|\right|\right|\nonumber \\ &\geq
2\left|\left|\left|
A^{\frac{3}{2}}XB^{\frac{1
}{2}}+A^{\frac{1}{2}}XB^{\frac{3}{2}}\right|\right|\right|+
2\left|\left|\left| A^rXB^{2-r}+A^{
2-r}XB^{r}\right|\right|\right|-(4-2t)\left|\left|\left|
AXB\right|\right|\right|\nonumber \\
&\geq
\frac{4}{r-\frac12}\int_0^{r-\frac12}\left|\left|\left| A^{\nu
+\frac{1}{2}}XB^{\frac{3}{2}-\nu }+A^{
\frac{3}{2}-\nu }XB^{\nu +\frac{1}{2}}\right|\right|\right|d\nu
-(4-2t)\left|\left|\left|
AXB\right|\right|\right|\nonumber\\
&\geq 4\left|\left|\left| A^{\frac{2r
+1}{4}}XB^{\frac{7-2r}{4}}+A^{\frac{7-2r}{4} }XB^{\frac{2r
+1}{4}}\right|\right|\right|-(4-2t)\left|\left|\left|
AXB\right|\right|\right|\nonumber \\
&\geq 4\left|\left|\left| A^{r}XB^{2-r}+A^{
2-r}XB^{r}\right|\right|\right|-(4-2t)\left|\left|\left|
AXB\right|\right|\right|\nonumber \\
&\geq (t+2)\left|\left|\left|
A^{r}XB^{2-r}+A^{2-r}XB^{r}\right|\right|\right|\,.
\end{align}
\item for $r\in[1,\frac32]$
\vspace{-0.2cm}\begin{align}
&\hspace{-0.5cm}\ 2\left|\left|\left|
A^{2}X+XB^{2}+tAXB\right|\right|\right|\geq
2\left|\left|\left|A^{2}X+XB^{2}+2AXB
\right|\right|\right|-(4-2t)\left|\left|\left|
AXB\right|\right|\right| \nonumber \\ &\geq
 4\left|\left|\left|
A^{\frac{3}{2}}XB^{\frac{1
}{2}}+A^{\frac{1}{2}}XB^{\frac{3}{2}}\right|\right|\right|
-(4-2t)\left|\left|\left|
AXB\right|\right|\right|\nonumber \\ &\geq
2\left|\left|\left|
A^{\frac{3}{2}}XB^{\frac{1
}{2}}+A^{\frac{1}{2}}XB^{\frac{3}{2}}\right|\right|\right|+
2\left|\left|\left| A^rXB^{2-r}+A^{
2-r}XB^{r}\right|\right|\right|-(4-2t)\left|\left|\left|
AXB\right|\right|\right|\nonumber \\
&\geq
\frac{4}{\frac 32 -r}\int_{r-\frac12}^1\left|\left|\left| A^{\nu
+\frac{1}{2}}XB^{\frac{3}{2}-\nu }+A^{
\frac{3}{2}-\nu }XB^{\nu +\frac{1}{2}}\right|\right|\right|d\nu
-(4-2t)\left|\left|\left|
AXB\right|\right|\right|\nonumber\\
&\geq 4\left|\left|\left| A^{\frac{2r
+3}{4}}XB^{\frac{5-2r}{4}}+A^{\frac{5-2r}{4} }XB^{\frac{2r
+3}{4}}\right|\right|\right|-(4-2t)\left|\left|\left|
AXB\right|\right|\right|\nonumber \\
&\geq 4\left|\left|\left| A^{r}XB^{2-r}+A^{
2-r}XB^{r}\right|\right|\right|-(4-2t)\left|\left|\left|
AXB\right|\right|\right|\nonumber \\
&\geq (t+2)\left|\left|\left|
A^{r}XB^{2-r}+A^{2-r}XB^{r}\right|\right|\right|\,.
\end{align}
\end{enumerate}
\end{theorem}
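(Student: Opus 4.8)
The plan is to reduce each of the two chains, via the single substitution $Y:=A^{1/2}XB^{1/2}$, to the Heinz inequality \eqref{1}, the arithmetic--geometric mean inequality \eqref{2}, the consequence \eqref{4} of the C--P--R inequality, and the Proposition of Kittaneh quoted above (\cite{Ki}). Note that $Y\in\mathfrak{J}$ since $\mathfrak{J}$ is a two-sided ideal, and that all four tools are available for an arbitrary unitarily invariant norm on a complex Hilbert space of arbitrary dimension.

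The first two links are elementary and identical in both parts. Since $t\le 2$ we have $2-t\ge 0$, so writing $A^{2}X+XB^{2}+2AXB=(A^{2}X+XB^{2}+tAXB)+(2-t)AXB$ and applying the triangle inequality gives the first displayed inequality. For the second, the identity $A^{2}X+XB^{2}+2AXB=A(AX+XB)+(AX+XB)B$ together with \eqref{2} applied to $AX+XB\in\mathfrak{J}$ (equivalently \eqref{1} with $\alpha=\tfrac12$) yields $|||A^{2}X+XB^{2}+2AXB|||\ge 2|||A^{1/2}(AX+XB)B^{1/2}|||=2|||A^{3/2}XB^{1/2}+A^{1/2}XB^{3/2}|||$; subtracting the common summand $(4-2t)|||AXB|||$ gives the second displayed estimate.

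For the remaining links, put $\alpha:=r-\tfrac12$ and use the identity $A^{\nu+1/2}XB^{3/2-\nu}+A^{3/2-\nu}XB^{\nu+1/2}=A^{\nu}YB^{1-\nu}+A^{1-\nu}YB^{\nu}$, valid for every $\nu$. Taking $\nu=\tfrac12$ gives $A^{3/2}XB^{1/2}+A^{1/2}XB^{3/2}=AY+YB$; taking $\nu=\alpha$ gives $A^{r}XB^{2-r}+A^{2-r}XB^{r}=A^{\alpha}YB^{1-\alpha}+A^{1-\alpha}YB^{\alpha}$; the quarter-exponent terms become $A^{\alpha/2}YB^{1-\alpha/2}+A^{1-\alpha/2}YB^{\alpha/2}$ when $r\in[\tfrac12,1]$ and $A^{(1+\alpha)/2}YB^{(1-\alpha)/2}+A^{(1-\alpha)/2}YB^{(1+\alpha)/2}$ when $r\in[1,\tfrac32]$; and, used inside the integral, it shows that the integral term of part~(1) equals $\frac{4}{\alpha}\int_{0}^{\alpha}|||A^{\nu}YB^{1-\nu}+A^{1-\nu}YB^{\nu}|||\,d\nu$ and that of part~(2) equals $\frac{4}{1-\alpha}\int_{\alpha}^{1}|||A^{\nu}YB^{1-\nu}+A^{1-\nu}YB^{\nu}|||\,d\nu$, i.e. $4$ times the normalized integral appearing in \eqref{kit1} (resp. \eqref{kit2}). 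Dividing each remaining displayed line by $4$, cancelling $(4-2t)|||AXB|||$ throughout, and cancelling $2|||AY+YB|||$ in the one link whose two sides both carry it, every link turns into either \eqref{1} applied to $Y$ (for the passage $|||AY+YB|||\ge|||A^{\alpha}YB^{1-\alpha}+A^{1-\alpha}YB^{\alpha}|||$) or exactly one of the four successive inequalities of Kittaneh's Proposition with $Y$ in place of $X$ and parameter $\alpha$. As $\alpha=r-\tfrac12$ lies in $[0,\tfrac12]$ for $r\in[\tfrac12,1]$ and in $[\tfrac12,1]$ for $r\in[1,\tfrac32]$, the first case uses part~(1) of that Proposition (inequalities \eqref{kit1}) and the second uses part~(2) (inequalities \eqref{kit2}); this is the only reason for the case distinction.

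Finally, the last link $4|||A^{r}XB^{2-r}+A^{2-r}XB^{r}|||-(4-2t)|||AXB|||\ge(t+2)|||A^{r}XB^{2-r}+A^{2-r}XB^{r}|||$ reads $0\ge 0$ when $t=2$ and, after division by $2-t>0$ when $t<2$, becomes $|||A^{r}XB^{2-r}+A^{2-r}XB^{r}|||\ge 2|||AXB|||$, i.e. (since $AXB=A^{1/2}YB^{1/2}$) $|||A^{\alpha}YB^{1-\alpha}+A^{1-\alpha}YB^{\alpha}|||\ge 2|||A^{1/2}YB^{1/2}|||$ for $\alpha\in[0,1]$, the minimality of the Heinz function at the midpoint. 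This follows from \eqref{2}; alternatively it follows from \eqref{4} by assuming first that $A,B$ are invertible and (say) $\alpha\ge\tfrac12$, putting $S=A^{\alpha-1/2}$, $T=B^{\alpha-1/2}\in\mathscr{S}_{0}(\mathscr{H})$ and $W=A^{1/2}YB^{1/2}$, so that $A^{\alpha}YB^{1-\alpha}+A^{1-\alpha}YB^{\alpha}=SWT^{-1}+S^{-1}WT$ and \eqref{4} gives the bound $2|||W|||=2|||AXB|||$; the case $\alpha\le\tfrac12$ is symmetric, and the non-invertible case follows by replacing $A,B$ with $A+\varepsilon I,\,B+\varepsilon I$ and letting $\varepsilon\to 0^{+}$, all quantities being norm-continuous in $\varepsilon$ for fixed $X$. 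Assembling the links establishes both parts. I do not expect a genuine obstacle: the whole content is the substitution $Y=A^{1/2}XB^{1/2}$, which rewrites the expressions whose $A$- and $B$-exponents sum to $2$ as standard Heinz expressions with exponents summing to $1$, so that Kittaneh's Proposition applies verbatim; everything else is bookkeeping together with the routine limiting argument for non-invertible $A,B$.
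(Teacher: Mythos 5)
Your proposal is correct and takes essentially the same route as the paper: both reduce every link of the chain, via a substitution identifying the exponent-sum-two expressions with standard Heinz expressions in the parameter $\alpha=r-\tfrac12$, to Kittaneh's refinement \eqref{kit1}--\eqref{kit2}, the Heinz inequality \eqref{1}, the arithmetic--geometric mean/C--P--R inequalities \eqref{2}, \eqref{4}, and the triangle inequality with $2-t\ge 0$. The only (cosmetic) difference is that the paper assumes $A,B$ invertible without loss of generality, applies Kittaneh to $A^{-1/2}XB^{-1/2}$ and substitutes $X\mapsto AXB$ at the end, whereas you substitute $Y=A^{1/2}XB^{1/2}$ from the start, which keeps all exponents nonnegative and confines the invertibility/limiting issue to the final link.
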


\begin{proof}
Let $X \in \mathfrak{J}_{\left|\left|\left| .\right|\right|\right|}$ and
without loss of generality we may assume that $A,B\in \mathcal{P}
_{0}(\mathscr{H}). $ Put $\alpha =r-\frac{1}{2}$ then $0\leq \alpha \leq 1$.

First, we consider the case $\alpha\in [0,\frac12]$.  Using Heinz
inequality and its refinements \eqref{kit1} for unitarily invariant norms and considering $A^{-\frac 12}XB^{-\frac 12}\in
\mathfrak{J}_{\left|\left|\left| .\right|\right|\right|}$ we have
\begin{align}\label{A}
 &\hspace{-0.5cm}\left|\left|\left|
A^{\frac{1}{2}}XB^{-\frac{1
}{2}}+A^{-\frac{1}{2}}XB^{\frac{1}{2}}\right|\right|\right|  \nonumber \\
&\hspace{2cm}\geq
\frac 12\left(\left|\left|\left|
A^{\frac{1}{2}}XB^{-\frac{1
}{2}}+A^{-\frac{1}{2}}XB^{\frac{1}{2}}\right|\right|\right|+
\left|\left|\left| A^{\alpha -\frac{1}{2}}XB^{\frac{1}{2}-\alpha }+A^{
\frac{1}{2}-\alpha }XB^{\alpha
-\frac{1}{2}}\right|\right|\right|\right)\nonumber \\
&\hspace{2cm}\geq
\frac{1}{\alpha}\int_0^{\alpha}\left|\left|\left| A^{\nu -\frac{1}{2}}XB^{\frac{1}{2}-\nu }+A^{
\frac{1}{2}-\nu }XB^{\nu -\frac{1}{2}}\right|\right|\right|d\nu\nonumber\\
&\hspace{2cm}\geq\left|\left|\left| A^{\frac{\alpha
-1}{2}}XB^{\frac{1-\alpha}{2}}+A^{\frac{1-\alpha}{2} }XB^{\frac{\alpha
-1}{2}}\right|\right|\right|\nonumber \\
&\hspace{2cm}\geq \left|\left|\left| A^{\alpha
-\frac{1}{2}}XB^{\frac{1}{2}-\alpha }+A^{
\frac{1}{2}-\alpha }XB^{\alpha -\frac{1}{2}}\right|\right|\right|.
\end{align}

Since
\begin{align}
AXB^{-1}+A^{-1}XB+2X&=A^{\frac{1}{2}}(A^{\frac{
1}{2}}XB^{-\frac{1}{2}}+A^{-\frac{1}{2}}XB^{\frac{1}{2}})B^{-\frac{1}{2}}
\nonumber \\&\:+A^{-\frac{1}{2}}(A^{\frac{1}{2}}XB^{-\frac{1}{2}
}+A^{-\frac{1}{2}}XB^{\frac{1}{2}})B^{\frac{1}{2}}, \nonumber \
\end{align}
utilizing the generalized version of C-P-R inequality for unitarily invariant
norms, we
obtain
\begin{eqnarray}\label{B}
\left|\left|\left|
AXB^{-1}+A^{-1}XB+2X\right|\right|\right| \geq
2\left|\left|\left| A^{
\frac{1}{2}}XB^{-\frac{1}{2}}+A^{-\frac{1}{2}}XB^{\frac{1}{2}}\right|\right|\right|\,.
\end{eqnarray}

It follows from \eqref{A} and \eqref{B} that
\begin{align}\label{C}
&\hspace{-1.9cm}\left|\left|\left|
AXB^{-1}+A^{-1}XB+2X\right|\right|\right| \geq
 2\left|\left|\left|
A^{\frac{1}{2}}XB^{-\frac{1
}{2}}+A^{-\frac{1}{2}}XB^{\frac{1}{2}}\right|\right|\right|  \nonumber \\ &\geq
\left|\left|\left|
A^{\frac{1}{2}}XB^{-\frac{1
}{2}}+A^{-\frac{1}{2}}XB^{\frac{1}{2}}\right|\right|\right|+
\left|\left|\left| A^{\alpha -\frac{1}{2}}XB^{\frac{1}{2}-\alpha }+A^{
\frac{1}{2}-\alpha }XB^{\alpha -\frac{1}{2}}\right|\right|\right|\nonumber \\
&\geq
\frac{2}{\alpha}\int_0^{\alpha}\left|\left|\left| A^{\nu -\frac{1}{2}}XB^{\frac{1}{2}-\nu }+A^{
\frac{1}{2}-\nu }XB^{\nu -\frac{1}{2}}\right|\right|\right|d\nu\nonumber\\
&\geq 2\left|\left|\left| A^{\frac{\alpha
-1}{2}}XB^{\frac{1-\alpha}{2}}+A^{\frac{1-\alpha}{2} }XB^{\frac{\alpha
-1}{2}}\right|\right|\right|\nonumber \\
&\geq 2\left|\left|\left| A^{\alpha -\frac{1}{2}}XB^{\frac{1}{2}-\alpha }+A^{
\frac{1}{2}-\alpha }XB^{\alpha -\frac{1}{2}}\right|\right|\right|.
\end{align}
On the other hand, due to
\begin{equation*}
AXB^{-1}+A^{-1}XB+2X=AXB^{-1}+A^{-1}XB+tX+(2-t)X,
\end{equation*}
we have
\begin{align}\label{D}
 \left|\left|\left|
AXB^{-1}+A^{-1}XB+2X\right|\right|\right|\leq \left|\left|\left|
AXB^{-1}+A^{-1}XB+tX\right|\right|\right| +(2-t)\left|\left|\left|
X\right|\right|\right|\,.
\end{align}
From two last inequalities \eqref{C} and \eqref{D}, we obtain
\begin{align}\label{D'}
&\hspace{-0.1cm}2\left|\left|\left|
AXB^{-1}+A^{-1}XB+tX\right|\right|\right|\geq
 2\left|\left|\left|
AXB^{-1}+A^{-1}XB+2X\right|\right|\right|-(4-2t)\left|\left|\left|
X\right|\right|\right|\nonumber \\&\geq
 4\left|\left|\left|
A^{\frac{1}{2}}XB^{-\frac{1
}{2}}+A^{-\frac{1}{2}}XB^{\frac{1}{2}}\right|\right|\right| -(4-2t)\left|\left|\left|
X\right|\right|\right| \nonumber \\ &\geq
2\left|\left|\left|
A^{\frac{1}{2}}XB^{-\frac{1
}{2}}+A^{-\frac{1}{2}}XB^{\frac{1}{2}}\right|\right|\right|+
2\left|\left|\left| A^{\alpha -\frac{1}{2}}XB^{\frac{1}{2}-\alpha }+A^{
\frac{1}{2}-\alpha }XB^{\alpha
-\frac{1}{2}}\right|\right|\right|-(4-2t)\left|\left|\left|
X\right|\right|\right|\nonumber \\
&\geq
\frac{4}{\alpha}\int_0^{\alpha}\left|\left|\left| A^{\nu -\frac{1}{2}}XB^{\frac{1}{2}-\nu }+A^{
\frac{1}{2}-\nu }XB^{\nu -\frac{1}{2}}\right|\right|\right|d\nu -(4-2t)\left|\left|\left|
X\right|\right|\right|\nonumber\\
&\geq 4\left|\left|\left| A^{\frac{\alpha
-1}{2}}XB^{\frac{1-\alpha}{2}}+A^{\frac{1-\alpha}{2} }XB^{\frac{\alpha
-1}{2}}\right|\right|\right|-(4-2t)\left|\left|\left|
X\right|\right|\right|\nonumber \\
&\geq 4\left|\left|\left| A^{\alpha
-\frac{1}{2}}XB^{\frac{1}{2}-\alpha }+A^{
\frac{1}{2}-\alpha }XB^{\alpha -\frac{1}{2}}\right|\right|\right|-(4-2t)\left|\left|\left|
X\right|\right|\right|\,.
\end{align}
From the generalized version of C-P-R inequality for unitarily invariant norms, it is easy to
see that if $s\in \mathbb{R}$
$$4\left|\left|\left| A^sXB^{-s }+A^{-s}XB^{s
}\right|\right|\right| -4\left|\left|\left|
X\right|\right|\right| +2t\left|\left|\left|
X\right|\right|\right|
\geq (t+2)\left|\left|\left| A^sXB^{-s }+A^{-s}XB^{s
}\right|\right|\right|\,.
$$
From \eqref{D'} and the last  inequality, we can deduce that for any $X\in \mathfrak{J}_{\left|\left|\left| .\right|\right|\right|}$
\begin{align}
& 2\left|\left|\left|
AXB^{-1}+A^{-1}XB+tX\right|\right|\right|
\geq 2\left|\left|\left|AXB^{-1}  +A^{-1}XB+2X \right|\right|\right|-(4-2t)\left|\left|\left|
X\right|\right|\right| \nonumber \\ &\geq
 4\left|\left|\left|
A^{\frac{1}{2}}XB^{-\frac{1
}{2}}+A^{-\frac{1}{2}}XB^{\frac{1}{2}}\right|\right|\right| -(4-2t)\left|\left|\left|
X\right|\right|\right| \nonumber \\ &\geq
2\left|\left|\left|
A^{\frac{1}{2}}XB^{-\frac{1
}{2}}+A^{-\frac{1}{2}}XB^{\frac{1}{2}}\right|\right|\right|+
2\left|\left|\left| A^{\alpha -\frac{1}{2}}XB^{\frac{1}{2}-\alpha }+A^{
\frac{1}{2}-\alpha }XB^{\alpha
-\frac{1}{2}}\right|\right|\right|-(4-2t)\left|\left|\left|
X\right|\right|\right|\nonumber \\
&\geq
\frac{4}{\alpha}\int_0^{\alpha}\left|\left|\left| A^{\nu -\frac{1}{2}}XB^{\frac{1}{2}-\nu }+A^{
\frac{1}{2}-\nu }XB^{\nu -\frac{1}{2}}\right|\right|\right|d\nu -(4-2t)\left|\left|\left|
X\right|\right|\right|\nonumber\\
&\geq 4\left|\left|\left| A^{\frac{\alpha -1}{2}}XB^{\frac{1-\alpha}{2}}+A^{\frac{1-\alpha}{2} }XB^{\frac{\alpha -1}{2}}\right|\right|\right|-(4-2t)\left|\left|\left|
X\right|\right|\right|\nonumber \\
&\geq 4\left|\left|\left| A^{\alpha -\frac{1}{2}}XB^{\frac{1}{2}-\alpha }+A^{
\frac{1}{2}-\alpha }XB^{\alpha -\frac{1}{2}}\right|\right|\right|-(4-2t)\left|\left|\left|
X\right|\right|\right|\nonumber \\&\geq
(t+2)\left|\left|\left|
A^{\alpha -\frac{1}{2}}XB^{\frac{1}{2}-\alpha }+A^{\frac{1}{2}-\alpha
}XB^{\alpha -\frac{1}{2}}\right|\right|\right|\,.\
\end{align}
whence, by replace $X$ by $AXB$ and $\alpha$ by $r-\frac12$, we get
\begin{align}
&\hspace{-0.5cm}\ 2\left|\left|\left|
A^{2}X+XB^{2}+tAXB\right|\right|\right|\geq
2\left|\left|\left|A^{2}X+XB^{2}+2AXB
\right|\right|\right|-(4-2t)\left|\left|\left|
AXB\right|\right|\right| \nonumber \\ &\geq
 4\left|\left|\left|
A^{\frac{3}{2}}XB^{\frac{1
}{2}}+A^{\frac{1}{2}}XB^{\frac{3}{2}}\right|\right|\right|
-(4-2t)\left|\left|\left|
AXB\right|\right|\right|\nonumber \\ &\geq
2\left|\left|\left|
A^{\frac{3}{2}}XB^{\frac{1
}{2}}+A^{\frac{1}{2}}XB^{\frac{3}{2}}\right|\right|\right|+
2\left|\left|\left| A^rXB^{2-r}+A^{
2-r}XB^{r}\right|\right|\right|-(4-2t)\left|\left|\left|
AXB\right|\right|\right|\nonumber \\
&\geq
\frac{4}{r-\frac12}\int_0^{r-\frac12}\left|\left|\left| A^{\nu
+\frac{1}{2}}XB^{\frac{3}{2}-\nu }+A^{
\frac{3}{2}-\nu }XB^{\nu +\frac{1}{2}}\right|\right|\right|d\nu
-(4-2t)\left|\left|\left|
AXB\right|\right|\right|\nonumber\\
&\geq 4\left|\left|\left| A^{\frac{2r
+1}{4}}XB^{\frac{7-2r}{4}}+A^{\frac{7-2r}{4} }XB^{\frac{2r
+1}{4}}\right|\right|\right|-(4-2t)\left|\left|\left|
AXB\right|\right|\right|\nonumber \\
&\geq 4\left|\left|\left| A^{r}XB^{2-r}+A^{
2-r}XB^{r}\right|\right|\right|-(4-2t)\left|\left|\left|
AXB\right|\right|\right|\nonumber \\
&\geq (t+2)\left|\left|\left|
A^{r}XB^{2-r}+A^{2-r}XB^{r}\right|\right|\right|\,.
\end{align}

Finally, we note that the case $\alpha\in [\frac12, 1]$ is obtained analogously
and this completes the proof.
\end{proof}

Note that the case $t\leq -2$ is trivial. An immediate consequence for the
case $r=1$ of this last theorem is the following (exactly the corollary 7 in
\cite{12} in finite dimensional case).

\begin{corollary}
Let $A,B\in \mathbb{B}(\mathscr{H})$ and let $t\leq 2.$ Then
\begin{eqnarray}\label{23}
\forall X\in \mathfrak{J}_{\left|\left|\left| .\right|\right|\right|},\ \left|\left|\left| A^*AX+XBB^{*
}+t\left\vert A\right\vert X\left\vert B\right\vert \right|\right|\right|
\geq (t+2)\left|\left|\left| AXB^*\right|\right|\right|\,.
\end{eqnarray}
\end{corollary}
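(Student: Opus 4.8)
The plan is to obtain \eqref{23} as the $r=1$ instance of Theorem \ref{t1}, applied not to $A,B$ themselves but to their moduli. Concretely, I would apply Theorem \ref{t1} (either part, both being valid at $r=1$) with the positive operators $|A|:=(A^{*}A)^{1/2}$ and $|B|:=(B^{*}B)^{1/2}$ — which belong to $\mathcal{P}(\mathscr{H})$ and are therefore admissible — in place of $A,B$, keeping the same $X\in\mathfrak{J}_{\left|\left|\left| .\right|\right|\right|}$. Since for $r=1$ one has $|A|^{r}X|B|^{2-r}+|A|^{2-r}X|B|^{r}=2\,|A|X|B|$, the last member of the chain in Theorem \ref{t1} is now $2(t+2)\left|\left|\left| |A|X|B|\right|\right|\right|$, and comparing it with the first member yields
\[
2\left|\left|\left| |A|^{2}X+X|B|^{2}+t\,|A|X|B|\right|\right|\right|\ \geq\ 2(t+2)\left|\left|\left| |A|X|B|\right|\right|\right|.
\]
Dividing by $2$ and recalling $|A|^{2}=A^{*}A$ and $|B|^{2}=B^{*}B$, this is exactly the inequality $\left|\left|\left| A^{*}AX+XB^{*}B+t\,|A|X|B|\right|\right|\right|\geq(t+2)\left|\left|\left| |A|X|B|\right|\right|\right|$, whose left-hand side is the one appearing in \eqref{23}.

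It then remains only to identify $\left|\left|\left| |A|X|B|\right|\right|\right|$ with $\left|\left|\left| AXB^{*}\right|\right|\right|$. For this I would use the polar decompositions $A=U_{A}|A|$ and $B=U_{B}|B|$. When $A,B$ are invertible the factors $U_{A},U_{B}$ are unitary, and since $B^{*}=|B|U_{B}^{*}$ we get $AXB^{*}=U_{A}\bigl(|A|X|B|\bigr)U_{B}^{*}$, so unitary invariance of $\left|\left|\left| .\right|\right|\right|$ gives the equality outright. In general $U_{A},U_{B}$ are only partial isometries, but one still has the two identities
\[
AXB^{*}=U_{A}\bigl(|A|X|B|\bigr)U_{B}^{*},\qquad |A|X|B|=U_{A}^{*}\bigl(AXB^{*}\bigr)U_{B}
\]
(the second because $|A|=U_{A}^{*}A$ and $|B|=B^{*}U_{B}$); applying the standard estimate $\left|\left|\left| CYD\right|\right|\right|\leq\|C\|\,\left|\left|\left| Y\right|\right|\right|\,\|D\|$ for unitarily invariant norms to each identity, and using that partial isometries are contractions, gives $\left|\left|\left| |A|X|B|\right|\right|\right|\leq\left|\left|\left| AXB^{*}\right|\right|\right|$ together with the reverse inequality, hence equality. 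Substituting this above gives \eqref{23}; the case $t\leq-2$ is trivial, its right-hand side being nonpositive.

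So the argument is, in essence, a one-line specialisation of Theorem \ref{t1} together with a routine polar-decomposition identity, and there is no serious obstacle. The single point that needs care is the identity $\left|\left|\left| |A|X|B|\right|\right|\right|=\left|\left|\left| AXB^{*}\right|\right|\right|$ in the possibly non-invertible case: the polar factors are then merely partial isometries, so one cannot invoke unitary invariance directly and must instead use the two-sided contraction estimate above — and one cannot get around this by approximating $A,B$ with invertible operators, since the invertibles are not dense in $\mathbb{B}(\mathscr{H})$ when $\dim\mathscr{H}=\infty$.
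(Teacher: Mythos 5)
Your argument is correct and is essentially the proof the paper intends: the paper simply declares the corollary to be the $r=1$ case of Theorem \ref{t1}, and your specialization to $|A|,|B|$ followed by the polar-decomposition identity $\left|\left|\left|\,|A|X|B|\,\right|\right|\right|=\left|\left|\left| AXB^{*}\right|\right|\right|$ (handled correctly via the two-sided partial-isometry estimate) is exactly the routine step left implicit there. No gap; you have merely written out the detail the paper omits.
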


Another immediate consequence of this last corollary is (exactly the corollary
8 in \cite{12} in finite dimensional case).

\begin{corollary}
Let $P,Q\in \mathcal{P}_{0}(\mathscr{H})$ and let $t\leq 2.$ Then
\begin{eqnarray}\label{24}
\forall X\in \mathfrak{J}_{\left|\left|\left| .\right|\right|\right|},\ \left|\left|\left|
PXQ^{-1}+P^{-1}XQ+tX\right|\right|\right| \geq (t+2)\left|\left|\left|
X\right|\right|\right|
\end{eqnarray}
\end{corollary}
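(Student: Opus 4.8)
The plan is to derive \eqref{24} directly from \eqref{23} by a single change of variable, so the whole argument is essentially bookkeeping. First I would observe that $P,Q\in\mathcal{P}_{0}(\mathscr{H})$ are in particular bounded operators and, being positive, satisfy $|P|=P$, $|Q|=Q$, $P^{*}P=P^{2}$ and $QQ^{*}=Q^{2}$; hence \eqref{23} applies with the choice $A:=P$, $B:=Q$. With these substitutions \eqref{23} reads $|||P^{2}Z+ZQ^{2}+tPZQ|||\geq(t+2)\,|||PZQ|||$ for every $Z\in\mathfrak{J}_{|||\cdot|||}$.

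Next, given an arbitrary $X\in\mathfrak{J}_{|||\cdot|||}$, I would apply this inequality to $Z:=P^{-1}XQ^{-1}$. Since $\mathfrak{J}_{|||\cdot|||}$ is a two-sided ideal of $\mathbb{B}(\mathscr{H})$ and $P^{-1},Q^{-1}\in\mathbb{B}(\mathscr{H})$, the operator $Z$ again belongs to $\mathfrak{J}_{|||\cdot|||}$, so it is an admissible test operator. Computing the three relevant products, $P^{2}Z=PXQ^{-1}$, $ZQ^{2}=P^{-1}XQ$ and $PZQ=X$, whence the inequality becomes
\[
|||PXQ^{-1}+P^{-1}XQ+tX|||\ \geq\ (t+2)\,|||X|||,
\]
which is exactly \eqref{24}. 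As $X$ was arbitrary, this proves the corollary.

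I do not expect any genuine difficulty, since the content reduces to this substitution. The two points that deserve a word of care are: verifying that $Z=P^{-1}XQ^{-1}$ stays inside the norm ideal $\mathfrak{J}_{|||\cdot|||}$, which is precisely the two-sided ideal property of $\mathfrak{J}_{|||\cdot|||}$ together with the boundedness of $P^{-1}$ and $Q^{-1}$; and the identifications $|P|=P$, $|Q|=Q$ (as well as $P^{*}P=P^{2}$, $QQ^{*}=Q^{2}$) needed to match the term $t|A|Z|B|$ appearing in \eqref{23} with $tPZQ$, which hold because $P$ and $Q$ are positive.
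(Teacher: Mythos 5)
Your proof is correct and follows exactly the route the paper intends: the paper gives no explicit argument beyond calling \eqref{24} an ``immediate consequence'' of \eqref{23}, and the implicit step is precisely your substitution $A=P$, $B=Q$, $X\mapsto P^{-1}XQ^{-1}$, using positivity to identify $|A|=P$, $|B|=Q$, $B^{*}=Q$ and the two-sided ideal property of $\mathfrak{J}_{\left|\left|\left| .\right|\right|\right|}$. Nothing further is needed.
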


\begin{remark}
The last theorem and their two consequences was proved by Zhan in \cite{12} in
the particular case of finite dimensional case. Note that
Cano--Mosconi--Stojanoff \cite{1} have proved the last corollary using the
spectral measure of a normal operator to generalize \cite[Corollary 8]{12} of Zhan for
arbitrary complex Hilbert space. Here, we have proved it in a general situation for an arbitrary Hilbert space using only
known operator inequalities.
\end{remark}

\begin{remark}
It follows from the above corollary that for every $k\leq 2$ and for every
operator $S\in \mathbb{C}^*\mathcal{P}_{0}(\mathscr{H})$ the following
inequality holds
\begin{eqnarray}\label{25}
\forall X\in \mathbb{B}(\mathscr{H}),\ \left\Vert SXS^{-1}+S^{-1}XS+kX\right\Vert
\geq (k+2)\left\Vert X\right\Vert
\end{eqnarray}

So it is interesting to characterize the class of all operators $S$ in $
\mathfrak{I}(\mathscr{H})$ satisfying this last inequality. We denote this class by $
\mathfrak{D}_{k}(\mathscr{H})$.
\end{remark}

\begin{proposition}\label{above}
For every real numbers $k,\ t$,

(i) if $k\geq t$, then $\mathfrak{D}_{k}(\mathscr{H})\subset \mathfrak{D}_{t}(\mathscr{H})$,

(ii) if $k\geq 0$, then $$\mathfrak{D}_{k}(\mathscr{H})\subset \left\{ \alpha
S:\alpha \in \mathbb{C}^*,\ S\in \mathscr{S}_{0}(\mathscr{H})\text{, }\left\vert
\frac{\lambda }{\mu }+\frac{\mu }{\lambda }+k\right\vert \geq k+2,\ \lambda
,\mu \in \sigma (S)\right\} .$$
\end{proposition}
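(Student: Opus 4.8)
The plan is to dispatch part (i) by a one-line triangle-inequality argument, and part (ii) by first reducing to the $k=0$ case (already settled in the literature) and then extracting the spectral condition by testing on rank-one operators.

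For part (i): given $S\in\mathfrak{D}_{k}(\mathscr{H})$ with $k\geq t$ and an arbitrary $X\in\mathbb{B}(\mathscr{H})$, I would write
\[
SXS^{-1}+S^{-1}XS+kX=\bigl(SXS^{-1}+S^{-1}XS+tX\bigr)+(k-t)X
\]
and invoke the triangle inequality:
\[
(k+2)\|X\|\leq\bigl\|SXS^{-1}+S^{-1}XS+kX\bigr\|\leq\bigl\|SXS^{-1}+S^{-1}XS+tX\bigr\|+(k-t)\|X\| ,
\]
whence $\bigl\|SXS^{-1}+S^{-1}XS+tX\bigr\|\geq(t+2)\|X\|$, i.e. $S\in\mathfrak{D}_{t}(\mathscr{H})$.

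For part (ii): since $k\geq0$, part (i) with $t=0$ yields $S\in\mathfrak{D}_{0}(\mathscr{H})$, and $\mathfrak{D}_{0}(\mathscr{H})$ is precisely the class of invertible operators for which \eqref{6} holds; by \cite{6} this class is $\mathbb{C}^{*}\mathscr{S}_{0}(\mathscr{H})$, so $S=\alpha M$ for some $\alpha\in\mathbb{C}^{*}$ and $M\in\mathscr{S}_{0}(\mathscr{H})$. As $\alpha$ cancels in $SXS^{-1}$ and $S^{-1}XS$, membership of $S$ in $\mathfrak{D}_{k}(\mathscr{H})$ is equivalent to
\[
\bigl\|MXM^{-1}+M^{-1}XM+kX\bigr\|\geq(k+2)\|X\|\qquad(X\in\mathbb{B}(\mathscr{H})).
\]
Now I fix $\lambda,\mu\in\sigma(M)$; since $M$ is self-adjoint, its spectrum equals its approximate point spectrum, so I can choose unit vectors $\xi_{n},\eta_{n}\in\mathscr{H}$ with $\|(M-\lambda)\xi_{n}\|\to0$ and $\|(M-\mu)\eta_{n}\|\to0$, and then (composing with the bounded operator $M^{-1}$, and using $\lambda,\mu\neq0$ as $M$ is invertible) also $\|(M^{-1}-\lambda^{-1})\xi_{n}\|\to0$ and $\|(M^{-1}-\mu^{-1})\eta_{n}\|\to0$. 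Applying the last displayed inequality to the norm-one rank-one operators $X_{n}$ given by $X_{n}\zeta=\langle\zeta,\eta_{n}\rangle\,\xi_{n}$, and using $MX_{n}M^{-1}=(M\xi_{n})\langle\,\cdot\,,M^{-1}\eta_{n}\rangle$ together with $M^{-1}X_{n}M=(M^{-1}\xi_{n})\langle\,\cdot\,,M\eta_{n}\rangle$, I expect the estimate
\[
\Bigl\|MX_{n}M^{-1}+M^{-1}X_{n}M+kX_{n}-\Bigl(\tfrac{\lambda}{\mu}+\tfrac{\mu}{\lambda}+k\Bigr)X_{n}\Bigr\|\longrightarrow0 ,
\]
whence $\|MX_{n}M^{-1}+M^{-1}X_{n}M+kX_{n}\|\to\bigl|\tfrac{\lambda}{\mu}+\tfrac{\mu}{\lambda}+k\bigr|$; since each of these norms is $\geq(k+2)\|X_{n}\|=k+2$, this forces $\bigl|\tfrac{\lambda}{\mu}+\tfrac{\mu}{\lambda}+k\bigr|\geq k+2$ for all $\lambda,\mu\in\sigma(M)$ --- equivalently, since $\sigma(S)=\alpha\,\sigma(M)$ and $\tfrac{\lambda}{\mu}+\tfrac{\mu}{\lambda}$ is invariant under scaling both variables by a common nonzero factor, for all $\lambda,\mu\in\sigma(S)$ --- which is exactly the membership condition of the set on the right.

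The only genuinely computational step is this rank-one limit: write $M\xi_{n}=\lambda\xi_{n}+u_{n}$, $M^{-1}\eta_{n}=\mu^{-1}\eta_{n}+v_{n}$ (and similarly for $M^{-1}\xi_{n}$ and $M\eta_{n}$) with the error vectors tending to $0$ in norm, then expand the bilinear products; this is routine and the triangle inequality makes the cross terms vanish in the limit. No step is a real obstacle; it is worth flagging only that the spectral condition is read off on $\sigma(M)$ and transferred to $\sigma(S)$ by scaling invariance, and that nothing here depends on $\dim\mathscr{H}$ (rank-one operators always lie in $\mathbb{B}(\mathscr{H})$).
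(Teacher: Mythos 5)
Your proof is correct, and part (i) is exactly the paper's argument (the paper merely says it ``follows by the same argument as in the proof of Theorem \ref{t1}'', i.e.\ the triangle-inequality decomposition you wrote out). For part (ii) your overall skeleton coincides with the paper's --- deduce $\left\Vert SXS^{-1}+S^{-1}XS\right\Vert \geq 2\left\Vert X\right\Vert$, invoke the characterization of $\mathbb{C}^*\mathscr{S}_{0}(\mathscr{H})$ from \cite{6}, reduce to a self-adjoint $M$, and then produce norm-one operators $X_n$ with $\left\Vert MX_nM^{-1}+M^{-1}X_nM+kX_n\right\Vert \to \left\vert \frac{\lambda}{\mu}+\frac{\mu}{\lambda}+k\right\vert$ --- but the way you produce the sequence $(X_n)$ is genuinely different. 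The paper considers the elementary operator $\varphi_{M,k}(X)=MXM^{-1}+M^{-1}XM+kX$ on $\mathbb{B}(\mathscr{H})$, quotes the formula $\sigma(\varphi_{M,k})=\left\{ \frac{\lambda}{\mu}+\frac{\mu}{\lambda}+k:\lambda,\mu\in\sigma(M)\right\}\subset\mathbb{R}$, and uses that real spectral values are approximate point values of $\varphi_{M,k}$ to get $(X_n)$ abstractly; you instead build $X_n$ explicitly as rank-one operators $\xi_n\langle\cdot,\eta_n\rangle$ from approximate eigenvectors of $M$ at $\lambda$ and $\mu$, with the routine estimate you describe (the reality of $\lambda,\mu$ makes the conjugation in the inner product harmless, and invertibility gives the approximate eigenvector property for $M^{-1}$). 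Your route is more elementary and self-contained, since it avoids the spectral formula for elementary operators, which is the only nontrivial external ingredient in the paper's version; the paper's route is shorter at the price of citing that machinery. Both then conclude identically from $k+2=\inf_{\left\Vert X\right\Vert=1}\left\Vert \varphi_{M,k}(X)\right\Vert$, and your closing remark about scale invariance of $\frac{\lambda}{\mu}+\frac{\mu}{\lambda}$ is harmless (the condition in the statement is on the spectrum of the self-adjoint factor, which is what you verified directly).
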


\begin{proof}
(i) This follows by the same argument as in the proof of Theorem \ref{t1}.

(ii) Let $S\in \mathfrak{D}_{k}(\mathscr{H}).$ It follows immediately that the
inequality $\left\Vert SXS^{-1}+S^{-1}XS\right\Vert \geq 2\left\Vert
X\right\Vert $ holds for every $X$ in $\mathbb{B}(\mathscr{H}).$ Thus $S\in \mathbb{C
}^*\mathscr{S}_{0}(\mathscr{H}).$

We may assume without loss of generality that $S$ is invertible and
self-adjoint. Denote by $\varphi _{S,k}$ the operator on $\mathbb{B}(\mathscr{H})$
given by $\varphi _{S,k}(X)=SXS^{-1}+S^{-1}XS+kX.$ So that $\sigma (\varphi
_{S,k})=\left\{ \frac{\lambda }{\mu }+\frac{\mu }{\lambda }+k:\lambda ,\mu
\in \sigma (S)\right\} \subset \mathbb{R}.$ Hence each spectral value of $
\varphi _{S,k}$ is in an approximate point value. Let $\lambda ,\mu \in
\sigma (S).$ Then there exists a sequence $(X_{n})$ of operators of norm
one such that $\left\Vert SX_{n}S^{-1}+S^{-1}X_{n}S+kX_{n}\right\Vert
\rightarrow \left\vert \frac{\lambda }{\mu }+\frac{\mu }{\lambda }
+k\right\vert .$ Thus $k+2=\inf_{\left\Vert X\right\Vert =1}\left\Vert
SXS^{-1}+S^{-1}XS+kX\right\Vert \leq \left\vert \frac{\lambda }{\mu }+\frac{
\mu }{\lambda }+k\right\vert .$
\end{proof}

\begin{remark}
In the case where $k\geq 0$ and $\dim \mathscr{H}=2$, the inclusion given in the above
proposition becomes an equality$.$

Indeed, let $S$ be an invertible self-adjoint operator in $\mathbb{B}(\mathscr{H}),$
and let $\lambda $ and $\mu $ be the eigenvalues of $S$ such that $\left\vert
\frac{\lambda }{\mu }+\frac{\mu }{\lambda }+k\right\vert \geq k+2.$ By a
simple computation, we obtain
\begin{equation*}
\forall X\in \mathbb{B}(\mathscr{H}),\ SXS^{-1}+S^{-1}XS+kX=\left(
\begin{array}{cc}
k+2 & \frac{\lambda }{\mu }+\frac{\mu }{\lambda }+k \\
\frac{\lambda }{\mu }+\frac{\mu }{\lambda }+k & k+2
\end{array}
\right) \circ X
\end{equation*}

Since the matrix $\left(
\begin{array}{cc}
\frac{1}{k+2} & 1\left/ \left( \frac{\lambda }{\mu }+\frac{\mu }{\lambda }
+k\right) \right. \\
1\left/ \left( \frac{\lambda }{\mu }+\frac{\mu }{\lambda }+k\right) \right.
& \frac{1}{k+2}
\end{array}
\right) $ is positive definite, thus using the Schur theorem, we obtain
\begin{equation*}
\forall X\in \mathbb{B}(\mathscr{H}),\left\Vert \left(
\begin{array}{cc}
\frac{1}{k+2} & 1\left/ \left( \frac{\lambda }{\mu }+\frac{\mu }{\lambda }
+k\right) \right. \\
1\left/ \left( \frac{\lambda }{\mu }+\frac{\mu }{\lambda }+k\right) \right.
& \frac{1}{k+2}
\end{array}
\right) \circ X\right\Vert \leq \frac{1}{k+2}\left\Vert X\right\Vert
\end{equation*}

Therefore
\begin{equation*}
\forall X\in \mathbb{B}(\mathscr{H}),\left\Vert \left(
\begin{array}{cc}
k+2 & \frac{\lambda }{\mu }+\frac{\mu }{\lambda }+k \\
\frac{\lambda }{\mu }+\frac{\mu }{\lambda }+k & k+2
\end{array}
\right) \circ X\right\Vert \geq (k+2)\left\Vert X\right\Vert
\end{equation*}
\end{remark}

\begin{conjecture}\label{conj}
Let $k$ be a real number such that $0\leq k\leq 2.$ Then for every natural
number $n$ and for every nonzero numbers $\lambda _{1},\dots,\lambda _{n}$
such that $\left\vert \frac{\lambda _{i}}{\lambda _{j}}+\frac{\lambda _{j}}{
\lambda _{i}}+k\right\vert \geq k+2$, for $i,j=1, \dots, n$ the matrix $\left(
\frac{\lambda _{i}\lambda _{j}}{\lambda _{i}^{2}+\lambda _{j}^{2}+k\lambda
_{i}\lambda _{j}}\right) _{1\leq i,j\leq n}$ is positive.
\end{conjecture}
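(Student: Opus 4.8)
\emph{Reduction to a Cauchy-type kernel.} Since the $\lambda_i$ are real and nonzero, setting $d_i=c_i\lambda_i$ gives, for all $c_1,\dots,c_n\in\mathbb C$,
\[
\sum_{i,j=1}^n\overline{c_i}\,c_j\,\frac{\lambda_i\lambda_j}{\lambda_i^{2}+\lambda_j^{2}+k\lambda_i\lambda_j}=\sum_{i,j=1}^n\overline{d_i}\,d_j\,\frac{1}{\lambda_i^{2}+\lambda_j^{2}+k\lambda_i\lambda_j},
\]
so the conjecture is equivalent to positive semidefiniteness of $K=\bigl(\frac{1}{\lambda_i^{2}+\lambda_j^{2}+k\lambda_i\lambda_j}\bigr)_{1\le i,j\le n}$. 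Put $k=2\cos\theta$, $\theta\in[0,\frac\pi2]$; then $\lambda_i^{2}+\lambda_j^{2}+k\lambda_i\lambda_j=|\lambda_i+e^{i\theta}\lambda_j|^{2}$, and the hypothesis keeps this strictly positive: if $\lambda_i\lambda_j<0$ it forces $\lambda_i^{2}+\lambda_j^{2}\ge(2k+2)|\lambda_i\lambda_j|$, hence $\lambda_i^{2}+\lambda_j^{2}+k\lambda_i\lambda_j\ge(k+2)|\lambda_i\lambda_j|>0$.

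\emph{Logarithmic variables.} Write $\lambda_i=s_ie^{\ell_i}$ with $s_i=\operatorname{sgn}\lambda_i$ and $\ell_i=\log|\lambda_i|$, and conjugate $K$ by $\operatorname{diag}(|\lambda_1|,\dots,|\lambda_n|)$ (a congruence, hence positivity-preserving). Its entries become $g_{s_is_j}(\ell_i-\ell_j)$, where
\[
g_{+}(y)=\frac{1}{2\cosh y+k},\qquad g_{-}(y)=\frac{1}{2\cosh y-k};
\]
thus both diagonal blocks (fixed sign) carry $g_{+}$ and the off-diagonal block carries $g_{-}$. In these variables the hypothesis is: $\cosh(\ell_i-\ell_j)\ge k+1$ whenever $s_i\ne s_j$, i.e.\ $g_{-}(\ell_i-\ell_j)\le\frac{1}{k+2}=g_{+}(0)$ on such pairs -- precisely the Cauchy--Schwarz consistency a Gram representation must respect. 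Each of $g_{+},g_{-}$ is a positive-definite function on $\mathbb R$ (for $k<2$): it is analytic in a horizontal strip and has Fourier transform a positive multiple of $\sinh(\beta\xi)/\sinh(\pi\xi)$ with $\beta=\pi-\theta$, resp.\ $\beta=\theta$; equivalently $g_{+}(y)=\frac14\operatorname{sech}\frac{y+i\theta}{2}\operatorname{sech}\frac{y-i\theta}{2}$ and $g_{-}(y)=\frac14\operatorname{csch}\frac{y+i\theta}{2}\operatorname{csch}\frac{y-i\theta}{2}$. Hence the two diagonal blocks of $K$ are already positive semidefinite.

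\emph{The crux, and where the difficulty lies.} It remains to show that the two-component kernel $K_0\bigl((\ell,s),(\ell',s')\bigr)=g_{ss'}(\ell-\ell')$, restricted to the parameter set cut out above, is positive definite, i.e.\ to build one Hilbert space $\mathscr{K}$ and vectors $v_i=V(\ell_i,s_i)$ realising these Gram data. The obvious routes are (a) a common model $g_{ss'}(y)=\int e^{iy\xi}w_s(\xi)\overline{w_{s'}(\xi)}\,d\mu(\xi)$ in some $L^{2}(\mathbb R,\mu)$, built from the two boundary shifts $\theta$ and $\pi-\theta$, and (b) an analytic-continuation / Nevanlinna--Pick argument in $z_{ij}=\lambda_i+e^{i\theta}\lambda_j$, exploiting that the $z_{ij}$ lie in a half-plane. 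I expect this to be the real obstacle: in route (a) the rank-one-per-index ansatz forces $\overline{\phi}\,d\mu_{+}=d\mu_{-}$ with $|\phi|\equiv1$, hence $\mu_{+}=\mu_{-}$ and $g_{+}=g_{-}$, impossible when $k\ne0$, so a higher-rank (GNS-type) construction is needed and that is nearly circular. The pairwise condition above is necessary, but its sufficiency for $n\ge3$ is exactly the conjecture; I would therefore settle the $3\times3$ case explicitly first. Should that case fail, the correct conclusion would be that the hypothesis $|\tfrac{\lambda_i}{\lambda_j}+\tfrac{\lambda_j}{\lambda_i}+k|\ge k+2$ must be upgraded from a pairwise to a simultaneous condition on $\lambda_1,\dots,\lambda_n$, and the statement reformulated accordingly.
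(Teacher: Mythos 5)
There is no proof in the paper for you to match: the statement is posed as a Conjecture, the paper only verifies (in the Remark preceding it, via Schur multipliers) what amounts to the $n=2$ case, and the theorem that follows is explicitly conditional on the conjecture's validity. Your proposal, as you yourself concede, is not a proof either, so the verdict has to be that the key step is missing --- though your preparatory reductions are correct and worth recording. The congruence by $\operatorname{diag}(\lambda_1,\dots,\lambda_n)$ does reduce the claim to positivity of $\bigl(1/(\lambda_i^2+\lambda_j^2+k\lambda_i\lambda_j)\bigr)_{i,j}$; with $k=2\cos\theta$ one has $\lambda_i^2+\lambda_j^2+k\lambda_i\lambda_j=\vert\lambda_i+e^{i\theta}\lambda_j\vert^2$, the hypothesis is automatic for $\lambda_i\lambda_j>0$ and for $\lambda_i\lambda_j<0$ it becomes $\cosh(\ell_i-\ell_j)\ge k+1$ in your logarithmic variables; and each of $g_{+}(y)=1/(2\cosh y+k)$, $g_{-}(y)=1/(2\cosh y-k)$ is indeed a positive definite function on $\mathbb{R}$ for $0\le k<2$ (one small slip: the Fourier transform of $1/(\cosh y+\cos\theta)$ is a positive multiple of $\sinh(\theta\xi)/\sinh(\pi\xi)$, so it is $g_{+}$ that carries $\beta=\theta$ and $g_{-}$ that carries $\beta=\pi-\theta$; this does not affect positivity). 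All of this, however, only yields positive semidefiniteness of the two same-sign principal submatrices, which is the easy half.

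The genuine gap is exactly the one you name: positivity of the full two-sign kernel, i.e.\ compatibility of the off-diagonal block $\bigl(g_{-}(\ell_i-\ell_j)\bigr)$ with the two $g_{+}$ blocks when only the pairwise bound $g_{-}(\ell_i-\ell_j)\le 1/(k+2)=g_{+}(0)$ is assumed. Your discussion of route (a) correctly shows that a rank-one Gram ansatz cannot work for $k\ne 0$, but that observation produces neither a higher-rank realization nor a counterexample; nothing you wrote settles even the $3\times 3$ case, and your closing suggestion that the pairwise hypothesis may have to be upgraded to a simultaneous one is speculation rather than evidence. So after your attempt the status is the same as in the paper: $n=2$ is known, $n\ge 3$ is open. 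If you pursue it, the smallest untested configuration is two $\lambda$'s of one sign and one of the other (a $2\times 2$ block of $g_{+}$ bordered by $g_{-}$ values), and that would also be the right first test of your route (b).
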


\noindent Furthermore,

\begin{theorem}
Assume that Conjecture \ref{conj} is valid and $\dim \mathscr{H}=n$. Then for every number $k$ such that $0\leq k\leq 2$,
$$\mathfrak{D}_{k}(\mathscr{H})=\left\{ \alpha S:\alpha \in \mathbb{C}^*,\
S\in \mathscr{S}_{0}(\mathscr{H})\text{, }\left\vert \frac{\lambda }{\mu }+\frac{\mu }{
\lambda }+k\right\vert \geq k+2,\ \lambda ,\mu \in \sigma (S)\right\} .$$
\end{theorem}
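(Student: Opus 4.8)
The plan is to prove the two inclusions separately. The inclusion $\subseteq$ is already Proposition~\ref{above}(ii) (and uses only $k\ge 0$), so I would only need to argue the reverse inclusion $\supseteq$. I would start by fixing $T$ in the right-hand side, writing $T=\alpha S$ with $\alpha\in\mathbb{C}^*$ and $S\in\mathscr{S}_0(\mathscr{H})$ satisfying $\left|\frac{\lambda}{\mu}+\frac{\mu}{\lambda}+k\right|\ge k+2$ for all $\lambda,\mu\in\sigma(S)$, and observing that $TXT^{-1}=SXS^{-1}$ and $T^{-1}XT=S^{-1}XS$ for every $X$, so that $T\in\mathfrak{D}_k(\mathscr{H})$ if and only if $S\in\mathfrak{D}_k(\mathscr{H})$; hence it suffices to show that any such invertible self-adjoint $S$ lies in $\mathfrak{D}_k(\mathscr{H})$. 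Since $\dim\mathscr{H}=n<\infty$, I would then diagonalize: pick a unitary $U$ with $USU^*=\mathrm{diag}(\lambda_1,\dots,\lambda_n)$ (the $\lambda_i$ being the nonzero real eigenvalues of $S$ with multiplicity) and, using $\varphi_{S,k}(X)=U^*\varphi_{USU^*,k}(UXU^*)U$ together with the unitary invariance of $\Vert\cdot\Vert$, reduce to the case $S=\mathrm{diag}(\lambda_1,\dots,\lambda_n)$.

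The heart of the argument is then a Schur-product computation. Writing $X=(x_{ij})$ in the standard basis, one checks directly that $\varphi_{S,k}(X)=C\circ X$, where $\circ$ is the Hadamard product and $C=\left(\frac{\lambda_i}{\lambda_j}+\frac{\lambda_j}{\lambda_i}+k\right)_{1\le i,j\le n}$. Because every entry of $C$ has modulus $\ge k+2>0$, it has no zero entry, so I can introduce $D=(d_{ij})$ with $d_{ij}=\left(\frac{\lambda_i}{\lambda_j}+\frac{\lambda_j}{\lambda_i}+k\right)^{-1}=\frac{\lambda_i\lambda_j}{\lambda_i^2+\lambda_j^2+k\lambda_i\lambda_j}$; then $D\circ C=J$ (the all-ones matrix) and $d_{ii}=\frac{1}{k+2}$ for each $i$. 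Since $0\le k\le 2$ and the $\lambda_i$ satisfy the stated modulus condition, Conjecture~\ref{conj} applies and gives that $D$ is positive. Now I would invoke the norm bound for positive Schur multipliers (the same fact used for $\dim\mathscr{H}=2$ earlier): $D\ge 0$ forces $\Vert D\circ Y\Vert\le\big(\max_i d_{ii}\big)\Vert Y\Vert=\frac{1}{k+2}\Vert Y\Vert$ for all $Y\in\mathbb{B}(\mathscr{H})$. Taking $Y=\varphi_{S,k}(X)=C\circ X$ and using $D\circ(C\circ X)=(D\circ C)\circ X=J\circ X=X$, this yields $\Vert X\Vert\le\frac{1}{k+2}\Vert\varphi_{S,k}(X)\Vert$, i.e. $\Vert SXS^{-1}+S^{-1}XS+kX\Vert\ge(k+2)\Vert X\Vert$ for every $X\in\mathbb{B}(\mathscr{H})$, so $S\in\mathfrak{D}_k(\mathscr{H})$ and $\supseteq$ follows.

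I do not expect a genuine technical obstacle here, because the positivity input has been quarantined into the hypothesis: once Conjecture~\ref{conj} is granted, the proof is exactly the $\dim\mathscr{H}=2$ Remark carried out in arbitrary finite dimension. The only points needing a little care are checking that $C$ has no zero entry (so that $D$ is well defined), verifying $d_{ii}=\frac{1}{k+2}$ and $D\circ C=J$, and citing the correct form of the positive-Schur-multiplier norm estimate for the operator norm. The genuinely substantive step is conceptual rather than computational: recognizing that applying the multiplier $D$ to $\varphi_{S,k}(X)$ reconstructs $X$ on the nose, which is the mechanism that converts the pointwise spectral condition on $\sigma(S)$ into the global operator inequality defining $\mathfrak{D}_k(\mathscr{H})$. (For $n=2$ the positivity of $\left(\frac{\lambda_i\lambda_j}{\lambda_i^2+\lambda_j^2+k\lambda_i\lambda_j}\right)$ is elementary, which is precisely why the corresponding equality is unconditional there.)
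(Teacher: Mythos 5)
Your proposal is correct and follows essentially the same route as the paper: the paper's proof also invokes Proposition~\ref{above}(ii) for one inclusion and establishes the other ``by the same argument used in the Remark,'' i.e.\ precisely the Schur-multiplier computation $\varphi_{S,k}(X)=C\circ X$ with entrywise inverse $D$ made positive by Conjecture~\ref{conj}, which you have simply written out in full for general $n$.
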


\begin{proof}
Using Proposition \ref{above}, it remains to prove that $$\left\{ \alpha
S:\alpha \in \mathbb{C}^*,\ S\in \mathscr{S}_{0}(\mathscr{H})\text{, }\left\vert
\frac{\lambda }{\mu }+\frac{\mu }{\lambda }+k\right\vert \geq k+2,\ \lambda
,\mu \in \sigma (S)\right\} \subset \mathfrak{D}_{k}(\mathscr{H}).$$ This follows by using
the same argument used in the Remark.
\end{proof}

Finally we present new variants of C-P-R inequality.

\begin{theorem} \label{t2}
Let $S \in \mathfrak{I}(\mathscr{H})$ and $X, Y \in
\mathfrak{J}_{\left|\left|\left| .\right|\right|\right|}.$
The following inequality holds and is equivalent to the C-P-R inequality for unitarily invariant norms:
\begin{eqnarray}\label{mos1}
{\rm (i)} \left|\left|\left| \left(SYS^{-1}+S^{* -1}YS^*\right) \oplus \left(S^*XS^{* -1}+S^{-1}XS\right)\right|\right|\right| \geq 2 \left|\left|\left| X\oplus Y\right|\right|\right|\,;
\end{eqnarray}
\begin{eqnarray}\label{mos2}
{\rm (ii)} \left|\left|\left| \left(SYS^{* -1}+S^{* -1}YS\right) \oplus \left(S^*XS^{-1}+S^{-1}XS^*\right)\right|\right|\right| \geq 2 \left|\left|\left| X\oplus Y\right|\right|\right|\,.
\end{eqnarray}
\end{theorem}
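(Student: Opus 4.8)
The key observation is that the operators sitting inside the norms in \eqref{mos1} and \eqref{mos2} are precisely the Corach--Porta--Recht expressions of \eqref{3} and \eqref{5}, only written for a single operator acting on $\mathscr{H}\oplus\mathscr{H}$ rather than on $\mathscr{H}$. So the plan is to read off each inequality from the already-established unitarily invariant versions of \eqref{3} and \eqref{5}, applied on $\mathbb{B}(\mathscr{H}\oplus\mathscr{H})$ with the unitarily invariant norm induced there. I will use freely the elementary facts that $A\oplus B\in\mathfrak{J}_{\left|\left|\left| .\right|\right|\right|}$ iff $A,B\in\mathfrak{J}_{\left|\left|\left| .\right|\right|\right|}$, that $\left|\left|\left|\cdot\right|\right|\right|$ on $\mathbb{B}(\mathscr{H}\oplus\mathscr{H})$ is invariant under left and right multiplication by block-diagonal unitaries and under interchanging the two blocks, and that $\left|\left|\left| A\oplus 0\right|\right|\right|=\left|\left|\left| A\right|\right|\right|$.

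For \eqref{mos1} I would take $\widehat{S}=\left[\begin{smallmatrix}0&S\\ S^{*}&0\end{smallmatrix}\right]$, which is self-adjoint and invertible with inverse $\left[\begin{smallmatrix}0&S^{*-1}\\ S^{-1}&0\end{smallmatrix}\right]$, so that $\widehat{S}\in\mathscr{S}_{0}(\mathscr{H}\oplus\mathscr{H})$. A direct $2\times 2$ block multiplication shows that, with $Z=X\oplus Y$, one has $\widehat{S}Z\widehat{S}^{-1}+\widehat{S}^{-1}Z\widehat{S}=\bigl(SYS^{-1}+S^{*-1}YS^{*}\bigr)\oplus\bigl(S^{*}XS^{*-1}+S^{-1}XS\bigr)$, which is exactly the operator in \eqref{mos1}; the unitarily invariant version of \eqref{3} applied to $\widehat{S}$ and $Z$ then yields \eqref{mos1} at once. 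For \eqref{mos2} I would instead use the invertible (in general non-self-adjoint) diagonal operator $\widehat{S}=S\oplus S^{*}$, with $\widehat{S}^{-1}=S^{-1}\oplus S^{*-1}$ and $\widehat{S}^{*}=S^{*}\oplus S$; a similar block computation gives $\widehat{S}^{*}Z\widehat{S}^{-1}+\widehat{S}^{-1}Z\widehat{S}^{*}=\bigl(S^{*}XS^{-1}+S^{-1}XS^{*}\bigr)\oplus\bigl(SYS^{*-1}+S^{*-1}YS\bigr)$, which, after interchanging the two summands, is the operator in \eqref{mos2}, and the unitarily invariant version of \eqref{5} applied to $\widehat{S}$ and $Z$ gives \eqref{mos2}.

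For the equivalence with the C-P-R inequality, the implications just sketched give one direction, so it remains to recover C-P-R from each of \eqref{mos1} and \eqref{mos2}. Put $Y=0$. In \eqref{mos2} this collapses the left side to $\left|\left|\left| S^{*}XS^{-1}+S^{-1}XS^{*}\right|\right|\right|$ and the right side to $2\left|\left|\left| X\right|\right|\right|$, which is exactly the unitarily invariant form of \eqref{5}. In \eqref{mos1} it gives $\left|\left|\left| S^{*}XS^{*-1}+S^{-1}XS\right|\right|\right|\geq 2\left|\left|\left| X\right|\right|\right|$ for every $S\in\mathfrak{I}(\mathscr{H})$; restricting $S$ to lie in $\mathscr{S}_{0}(\mathscr{H})$ (so that $S^{*}=S$) turns this into $\left|\left|\left| SXS^{-1}+S^{-1}XS\right|\right|\right|\geq 2\left|\left|\left| X\right|\right|\right|$, i.e. the unitarily invariant form of \eqref{3}. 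Since \eqref{3}, \eqref{4}, \eqref{5} and their unitarily invariant versions are already known to be mutually equivalent, this closes the circle.

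The calculations involved are routine; I expect the only real choices to be the two auxiliary operator matrices and the verification that $\left[\begin{smallmatrix}0&S\\ S^{*}&0\end{smallmatrix}\right]$ is self-adjoint invertible while $S\oplus S^{*}$ is invertible. The one genuinely load-bearing point is that the generalized (unitarily invariant) C-P-R inequality is available on $\mathbb{B}(\mathscr{H}\oplus\mathscr{H})$ and that the stability properties of induced unitarily invariant norms under direct sums used above are legitimate --- both standard, but worth stating explicitly in the write-up.
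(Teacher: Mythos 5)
Your proof is correct, and the forward direction of (i) is exactly the paper's argument: the self-adjoint dilation $\left[\begin{smallmatrix}0&S\\ S^{*}&0\end{smallmatrix}\right]$ applied to $X\oplus Y$ together with the unitarily invariant C-P-R inequality. You diverge from the paper in two places, both legitimately. For (ii), the paper keeps the same off-diagonal self-adjoint dilation but applies it to the off-diagonal element $\left[\begin{smallmatrix}0&X\\ Y&0\end{smallmatrix}\right]$, using $\left|\left|\left| X\oplus Y\right|\right|\right|=\left|\left|\left|\left[\begin{smallmatrix}0&X\\ Y&0\end{smallmatrix}\right]\right|\right|\right|$; this buys the result from \eqref{3} alone. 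You instead use the diagonal operator $S\oplus S^{*}$ and invoke the unitarily invariant version of \eqref{5}; that version is not literally among the inequalities the paper cites as generalized (it lists \eqref{1}--\eqref{4}), but it follows from \eqref{3} by the same polar-decomposition argument as in the operator-norm case, so your route is sound provided you record that step, as you yourself note. For the converse, the paper sets $Y=X$ with $S$ self-adjoint and appeals to the Fan dominance principle to pass from $\left|\left|\left| A\oplus A\right|\right|\right|\geq\left|\left|\left| B\oplus B\right|\right|\right|$ to $\left|\left|\left| A\right|\right|\right|\geq\left|\left|\left| B\right|\right|\right|$, so its equivalence is at the level of the whole family of unitarily invariant norms; your choice $Y=0$ combined with $\left|\left|\left| A\oplus 0\right|\right|\right|=\left|\left|\left| A\right|\right|\right|$ (padding by zeros does not change singular values) is more elementary and recovers \eqref{3}, respectively \eqref{5}, norm by norm, which is a small gain in directness.
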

\begin{proof}
(i) Clearly $\left[\begin{array}{cc} 0 & S \\ S^* & 0 \end{array}\right]$ is a self adjoint operator in $\mathbb{B}(\mathscr{H}\oplus\mathscr{H})$ and $\left[\begin{array}{cc} 0 & S \\ S^* & 0 \end{array}\right]^{-1}=\left[\begin{array}{cc} 0 & S^{* -1} \\ S^{-1} & 0 \end{array}\right]$. It follows from the C-P-R inequality for unitarily invariant norms that
\begin{align*}
\left|\left|\left|\left[\begin{array}{cc} 0 & S \\ S^* & 0 \end{array}\right] \left[\begin{array}{cc} X & 0 \\ 0 & Y \end{array}\right] \left[\begin{array}{cc} 0 & S \\ S^* & 0 \end{array}\right]^{-1} + \left[\begin{array}{cc} 0 & S \\ S^* & 0 \end{array}\right]^{-1}\left[\begin{array}{cc} X & 0 \\ 0 & Y  \end{array}\right]\left[\begin{array}{cc} 0 & S \\ S^* & 0 \end{array}\right] \right|\right|\right|\\ \geq 2 \left|\left|\left|\left[\begin{array}{cc} X & 0 \\ 0 & Y  \end{array}\right] \right|\right|\right|\,,
\end{align*}
whence
\begin{align*}
\left|\left|\left|\left[\begin{array}{cc} SYS^{-1}+S^{* -1}YS^* & 0 \\ 0 & S^*XS^{* -1}+S^{-1}XS \end{array}\right]\right|\right|\right| \geq 2 \left|\left|\left|\left[\begin{array}{cc} X & 0 \\ 0 & Y  \end{array}\right] \right|\right|\right|\,,
\end{align*}
which is indeed \eqref{mos1}. Now assume that \eqref{mos1} holds for all $X, Y \in \mathfrak{J}_{\left|\left|\left| .\right|\right|\right|}$ and $S \in \mathfrak{I}(\mathscr{H})$. To get \eqref{2} for unitarily invariant norms, let $S$ be self-adjoint, take $Y=X$ and use the fact that two inequalities $|||A|||\leq |||B|||$ and $|||A\oplus A|||\leq |||B\oplus B|||$, by the Fan dominance principle, are equivalence for all unitarily invariant norms (see \cite{Ki}).\\
(ii) To get inequality \eqref{mos2}, use the same argument as in (i) with the matrix $\left[\begin{array}{cc} 0 & X \\ Y & 0  \end{array}\right]$ and note that $|||X\oplus Y|||=\left|\left|\left|\left[\begin{array}{cc} 0 & X \\ Y & 0 \end{array}\right]\right|\right|\right|$.
\end{proof}

\begin{corollary}
(i) If $S \in \mathfrak{I}(\mathscr{H})$ and $X \in \mathbb{B}(\mathscr{H})$, then the following inequality holds and is equivalent to the C-P-R inequality
\begin{eqnarray*}
\max\{\Vert SXS^{-1}+S^{* -1}XS^* \Vert, \Vert S^*XS^{* -1}+S^{-1}XS \Vert\} \geq 2 \Vert X\Vert\,.
\end{eqnarray*}
(ii) If $S \in \mathfrak{I}(\mathscr{H})$ and $X, Y$ are in the Schatten $p$-class, then the following inequality holds and is equivalent to the C-P-R inequality for the Schatten $p$-norm
\begin{eqnarray*}
\left|\left|\left| SXS^{-1}+S^{* -1}XS^*\right|\right|\right|_p^p + \left|\left|\left| S^*XS^{* -1}+S^{-1}XS\right|\right|\right|_p^p \geq 2^{p+1} \left|\left|\left| X\right|\right|\right|_p^p \,.
\end{eqnarray*}
\end{corollary}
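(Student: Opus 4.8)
The plan is to read both parts off directly from Theorem~\ref{t2}(i), by specializing $Y=X$ and inserting the two norm identities recorded in \eqref{erf}.

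For part (i) I would start from \eqref{mos1} with $Y=X$, so that its left-hand side is the operator norm of $\left(SXS^{-1}+S^{*-1}XS^*\right)\oplus\left(S^*XS^{*-1}+S^{-1}XS\right)$ while its right-hand side is $2\|X\oplus X\|$. Applying the first identity in \eqref{erf}, $\|A\oplus B\|=\max(\|A\|,\|B\|)$, to both sides turns the left-hand side into $\max\{\|SXS^{-1}+S^{*-1}XS^*\|,\ \|S^*XS^{*-1}+S^{-1}XS\|\}$ and the right-hand side into $2\|X\|$, which is precisely the asserted inequality. For the equivalence with the C-P-R inequality, one implication is immediate: Theorem~\ref{t2} is already equivalent to C-P-R for unitarily invariant norms, hence in particular for the operator norm, so its corollary holds as soon as C-P-R does. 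For the reverse implication I would specialize the displayed inequality to $S\in\mathscr{S}_0(\mathscr{H})$; then $S^*=S$, both entries of the maximum coincide with $\|SXS^{-1}+S^{-1}XS\|$, and the inequality collapses to $\|SXS^{-1}+S^{-1}XS\|\geq 2\|X\|$, i.e. \eqref{3}.

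For part (ii) the argument is identical, now carried out with the Schatten $p$-norm. Starting again from \eqref{mos1} with $Y=X$ for $\|\cdot\|_p$, the second identity in \eqref{erf}, $\|A\oplus B\|_p=\left(\|A\|_p^p+\|B\|_p^p\right)^{1/p}$, rewrites the left-hand side as $\left(\|SXS^{-1}+S^{*-1}XS^*\|_p^p+\|S^*XS^{*-1}+S^{-1}XS\|_p^p\right)^{1/p}$ and the right-hand side as $2\left(\|X\|_p^p+\|X\|_p^p\right)^{1/p}=2\cdot 2^{1/p}\|X\|_p$. Raising both sides to the $p$-th power yields the stated inequality, since $\left(2\cdot 2^{1/p}\right)^p=2^{p+1}$. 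The equivalence with C-P-R for the Schatten $p$-norm is obtained exactly as in (i): taking $S$ self-adjoint makes the two summands on the left equal, so the inequality becomes $2\|SXS^{-1}+S^{-1}XS\|_p^p\geq 2^{p+1}\|X\|_p^p$, that is $\|SXS^{-1}+S^{-1}XS\|_p\geq 2\|X\|_p$.

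I do not expect a genuine obstacle here: the statement is a direct corollary of Theorem~\ref{t2} combined with the block-norm formulas \eqref{erf}. The only points needing a little care are the constant in (ii) --- one must track that the factor $2$ on the right of \eqref{mos1} combines with the $2^{1/p}$ coming from $\|X\oplus X\|_p$ to give $2^{p+1}$ after taking $p$-th powers, not $2^p$ --- and, for the ``equivalent to C-P-R'' clause, remembering to invoke the self-adjoint specialization so that the one-sided max/sum inequality genuinely forces the two-term C-P-R inequality, rather than merely following from it.
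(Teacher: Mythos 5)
Your proposal is correct and is essentially the paper's own argument: the paper proves the corollary by exactly the same specialization, namely applying \eqref{mos1} with $Y=X$ and invoking the block-norm identities \eqref{erf}, with the self-adjoint specialization giving the equivalence with C-P-R. Your write-up merely makes explicit the constant bookkeeping in (ii) and the converse direction, which the paper leaves to the reader.
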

\begin{proof}
Apply \eqref{mos1} to $Y=X$ and equalities \eqref{erf}.
\end{proof}

\end{document}